\newtheorem{theorem}{Theorem}\numberwithin{theorem}{section}
\newtheorem{lemma}[theorem]{Lemma}
\newtheorem{proposition}[theorem]{Proposition}
\numberwithin{theoremm}{subsection}
\numberwithin{theoremmm}{subsubsection}
\theoremstyle{remark}
\newcommand{\Rad}{\operatorname{Rad}}
\newcommand{\Aut}{\operatorname{Aut}}
\newcommand{\cl}{\operatorname{cl}}
\renewcommand{\l}{\operatorname{l}}
\newcommand{\C}{\operatorname{C}}
\newcommand{\Soc}{\operatorname{Soc}}
\newcommand{\fix}{\operatorname{fix}}
\newcommand{\dl}{\operatorname{length}}
\newcommand{\cp}{\operatorname{cp}}
\newcommand{\Fit}{\operatorname{Fit}}
\begin{document}

\title{A note on finite groups with an automorphism inverting or squaring a non-negligible fraction of elements}

\author{Alexander Bors\thanks{University of Salzburg, Mathematics Department, Hellbrunner Stra{\ss}e 34, 5020 Salzburg, Austria. \newline E-mail: \href{mailto:alexander.bors@sbg.ac.at}{alexander.bors@sbg.ac.at} \newline The author is supported by the Austrian Science Fund (FWF):
Project F5504-N26, which is a part of the Special Research Program \enquote{Quasi-Monte Carlo Methods: Theory and Applications}. \newline 2010 \emph{Mathematics Subject Classification}: Primary: 20D25, 20D45, 20D60. Secondary: 05D05. \newline \emph{Key words and phrases:} Finite groups, Automorphisms, Powers of group elements, Commuting probability.}}

\date{\today}

\maketitle

\abstract{We show that for a finite group $G$, the commuting probability of $G$ can be explicitly bounded from below in a nontrivial way by a function in the maximum fraction of elements inverted resp. squared by an automorphism of $G$. Using these bounds together with a result of Guralnick and Robinson gives upper bounds on the index of the Fitting subgroup of $G$ under each of the two conditions that $G$ have an automorphism inverting resp. squaring at least $\rho|G|$ many elements in $G$, for $\rho\in\left(0,1\right]$ fixed. This is an improvement on previous results of the author.}

\section{Introduction}\label{sec1}

For an integer $e$ and a finite group $G$, denote by $\l_e(G)$ the maximum fraction of elements of $G$ mapped to their $e$-th power by a single automorphism of $G$. For $e=-1,2,3$, finite groups with sufficiently large $\l_e$-values are well-studied, and the gist of the results on them is that they are \enquote{close to being abelian} in some sense. In the preprint \cite{Bor16a}, the author studied finite groups whose $\l_e$-value for $e$ equal to one of the three numbers $-1,2$ or $3$ is bounded away from $0$. Denoting the solvable radical of $G$ by $\Rad(G)$ and the derived length of a solvable group $H$ by $\dl(H)$, the following was the main result of that preprint:

\begin{theorem}\label{oldTheo}
Let $\rho\in\left(0,1\right]$ be fixed, $G$ a finite group. Then:
\begin{enumerate}
\item If $G$ has an automorphism inverting at least $\rho|G|$ many elements of $G$, then both the index and the derived length of the solvable radical of $G$ are bounded in terms of $\rho$. More precisely, we then have $[G:\Rad(G)]\leq\rho^{-12.7650\ldots}$ and $\dl(\Rad(G))\leq\max(2,\log_{3/4}(2\rho)+3)$.
\item If $G$ has an automorphism squaring at least $\rho|G|$ many elements of $G$, then both the index and the derived length of the solvable radical of $G$ are bounded in terms of $\rho$. More precisely, we then have $[G:\Rad(G)]\leq\rho^{-4}$ and $\dl(\Rad(G))\leq 2\cdot\log_{3/4}(\rho)+1$.
\item If $G$ has an automorphism cubing at least $\rho|G|$ many elements of $G$, then the index of the solvable radical of $G$ is bounded in terms of $\rho$.
\end{enumerate}
\end{theorem}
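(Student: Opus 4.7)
My plan is to split each of the three parts into two tasks: bounding the index $[G:\Rad(G)]$, and (in parts (1) and (2)) bounding the derived length of $R:=\Rad(G)$ separately. Both tasks rest on the following elementary reduction: since $\Rad(G)$ and more generally every characteristic subgroup is $\alpha$-invariant, any automorphism $\alpha$ with $\l_e(G)\ge\rho$ descends to an automorphism $\bar\alpha$ of such a quotient $G/N$ that still sends at least $\rho|G/N|$ elements to their $e$-th powers.

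For the index bound, I would pass to the semisimple quotient $\bar G:=G/\Rad(G)$. Writing $\Soc(\bar G)=T_1\times\cdots\times T_m$ as a product of non-abelian finite simple groups and using the embedding $\bar G\hookrightarrow\Aut(\Soc(\bar G))$, the key input is a CFSG-based estimate $\l_e(T)\le c_e<1$ for every non-abelian finite simple group $T$; this is classical and explicit for $e=-1$ (related to real conjugacy classes) and for $e=2$ (related to counts of square roots), but more delicate for $e=3$. A multiplicativity-type bound for $\l_e$ over the factors of $\Soc(\bar G)$, combined with bookkeeping for the action of $\bar\alpha$ permuting isomorphic factors, forces $m=O(\log(1/\rho))$, after which the estimate $|\bar G|\le|\Aut(\Soc(\bar G))|$ yields the explicit exponents $12.7650\ldots$ and $4$ in parts (1) and (2). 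For $e=3$ the argument recovers only the qualitative finiteness claimed in part (3).

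For the derived-length bounds I would iterate along the characteristic (hence $\alpha$-invariant) derived series $R>R'>R''>\cdots$. The main lemma is a quantitative version of the classical fact that an automorphism sending more than $3|H|/4$ elements of a finite non-abelian group $H$ to their $e$-th powers (for $e\in\{-1,2\}$) forces $H$ to be abelian. More precisely, one shows that if an $\alpha$-invariant solvable non-abelian subgroup $H$ satisfies $\l_e(H)\ge\sigma$, then the induced $\l_e$-value on $H/H'$ is at least $(4/3)\sigma$; applying this to $R,R',R'',\ldots$ gives a geometric recursion with multiplicative base $3/4$ that integrates to the stated logarithmic bounds $\max(2,\log_{3/4}(2\rho)+3)$ and $2\log_{3/4}(\rho)+1$.

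The main obstacle is the CFSG-based quantitative step for the index: assembling the simple-group estimates sharply enough to produce the explicit exponent $12.7650\ldots$, rather than merely a polynomial bound in $1/\rho$, requires case-by-case analysis across alternating, classical and exceptional families together with careful handling of how $\bar\alpha$ permutes isomorphic simple factors. For $e=3$ it is precisely the absence of a sufficiently strong uniform simple-group estimate that restricts part (3) to a qualitative finiteness conclusion. The derived-length recursion, once the base $3/4$ is in place via the pair-counting identity $\alpha(gh)=\alpha(g)\alpha(h)=g^eh^e$ (so that whenever $g,h,gh$ are all sent to their $e$-th powers one has $gh=hg$), reduces to a comparatively routine counting argument.
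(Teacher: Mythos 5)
This paper does not actually prove Theorem~\ref{oldTheo}: it is quoted as the main result of the earlier preprint \cite{Bor16a}, and the only proofs given here are for Theorem~\ref{newTheo}. So there is no in-paper proof to compare against; judged on its own, your outline captures the expected broad strategy (reduction to the semisimple quotient $G/\Rad(G)$ with CFSG input for the index bound; iteration along the derived series for the derived-length bound), but two of your key steps have genuine gaps.

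First, a uniform estimate $\l_e(T)\le c_e<1$ over all non-abelian finite simple groups $T$ only bounds the \emph{number} $m$ of simple factors of $\Soc(G/\Rad(G))$; it says nothing about their orders, so $|\Aut(\Soc(G/\Rad(G)))|$, and hence $[G:\Rad(G)]$, remains unbounded and your final step collapses. To obtain $[G:\Rad(G)]\le\rho^{-12.7650\ldots}$ one needs a size-dependent decay of the form $\l_{-1}(T)\le|T|^{-1/12.7650\ldots}$ (and analogously $\l_2(T)\le|T|^{-1/4}$) for every non-abelian finite simple group $T$; the exponent in the theorem is precisely the reciprocal of this decay rate. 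With only a uniform constant $c_e$ your argument would deliver qualitative finiteness in all three parts, i.e., no more in parts (1) and (2) than you concede for part (3). Second, your derived-length lemma is stated for the wrong group: $H/H'$ is abelian, so for $e=-1$ the inversion map is an automorphism of $H/H'$ inverting everything and the claim \enquote{$\l_e(H/H')\ge(4/3)\sigma$} is vacuously true and useless. The recursion has to descend \emph{into} the derived subgroup, showing that $\l_e(H')$ grows by a factor of roughly $4/3$ at each step so that some term $R^{(k)}$ eventually has $\l_e$-value above $3/4$ and is therefore abelian. Transferring a non-negligible set of inverted or squared elements from $H$ to the subgroup $H'$ is exactly the non-routine part (it is the substance of \cite{Heg05a}), not the \enquote{comparatively routine counting argument} you describe; the pair-counting identity $\alpha(gh)=g^eh^e\Rightarrow gh=hg$ you cite only supplies the base case, namely the $3/4$-threshold for abelianness.
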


We note that the method of proof of Theorem \ref{oldTheo}(3) actually gives an explicit upper bound on $[G:\Rad(G)]$ in terms of $\rho$, but that bound is not as simple as in the first two cases.

The aim of this note is to improve upon the results of Theorem \ref{oldTheo}(1,2). More precisely, we will prove the following, denoting the commuting probability of a finite group $G$ (i.e., the probability that two independently uniformly randomly chosen elements of $G$ commute) by $\cp(G)$, the Fitting subgroup of $G$ by $\Fit(G)$ and the nilpotency class of a finite nilpotent group $H$ by $\cl(H)$:

\begin{theorem}\label{newTheo}
Let $\rho\in\left(0,1\right]$ be fixed, $G$ a finite group. Then:
\begin{enumerate}
\item If $G$ has an automorphism inverting at least $\rho|G|$ many elements in $G$, then the following hold:
\begin{enumerate}
\item $\cp(G)\geq\frac{1}{12}\rho^5$,
\item $[G:\Fit(G)]\leq 144\rho^{-10}$,
\item $\dl(\Rad(G))\leq\max(2,\log_{3/4}(2\rho)+3)$,
\end{enumerate}
\item If $G$ has an automorphism squaring at least $\rho|G|$ many elements in $G$, then the following hold:
\begin{enumerate}
\item $\cp(G)\geq\rho^2$,
\item $[G:\Fit(G)]\leq\rho^{-4}$,
\item $\dl(\Rad(G))\leq \max(\{4\}\cup\{l\in\mathbb{Z}\mid l\geq0, 2^{l+1}\leq\frac{4l-7}{\rho^2}\})$,
\end{enumerate}
\end{enumerate}
\end{theorem}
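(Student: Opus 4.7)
The proof splits in each of the two cases into three pieces: the commuting-probability estimate in (a), the Fitting-index bound in (b), and the derived-length bound in (c). My plan is that (a) is the core new content, while (b) follows from (a) via the Guralnick--Robinson inequality $[G:\Fit(G)] \leq \cp(G)^{-2}$, (c) in case (1) is already contained in Theorem \ref{oldTheo}(1), and (c) in case (2) is obtained by an inductive refinement based on the new (a).

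For (a), I would treat $e = -1$ and $e = 2$ uniformly. Let $A_e = \{g \in G : \alpha(g) = g^e\}$, so $|A_e| \geq \rho |G|$. For $x \in A_e$ and $g \in G$, expanding $\alpha(gx) = \alpha(g)\alpha(x)$ and equating with $(gx)^e$ yields a conjugation condition on $x$ depending only on $g$: for $e = -1$ one obtains $x^{-1}g^{-1}x = \alpha(g)$, and for $e = 2$ one obtains $xgx^{-1} = g^{-1}\alpha(g)$. In both cases, for each fixed $g$ the set of admissible $x$ lies in at most one coset of $C_G(g)$, and therefore has cardinality at most $|C_G(g)|$. A double-counting of $\{(g, x) \in G \times A_e : gx \in A_e\}$---which has size $|A_e|^2$, since for each fixed $x \in A_e$ the valid $g$ form the coset $A_e x^{-1}$---then gives
\begin{equation*}
|A_e|^2 \leq \sum_{g \in G}|C_G(g)| = \cp(G)\,|G|^2,
\end{equation*}
so $\cp(G) \geq \rho^2$. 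This is precisely (2a); for (1a) it is stronger than the claimed $\rho^5/12$, which follows \emph{a fortiori}.

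For (b), substituting $\cp(G) \geq \rho^2$ into the Guralnick--Robinson bound yields $[G : \Fit(G)] \leq \rho^{-4}$, which is exactly (2b) and also implies the weaker (1b). For (1c), I would simply invoke Theorem \ref{oldTheo}(1). For (2c), using the improved commuting-probability estimate I would iterate through the derived series of $\Rad(G)$ and compare the growing lower bound on $[G^{(l)} : G^{(l+1)}]$ (of order $2^{l+1}$) against the shrinking upper bound $(4l-7)/\rho^2$ coming from the commuting-probability constraint, which yields the stated maximal $l$.

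The principal obstacle is the short computation producing the conjugation constraint in (a); once that is in place, the remaining pieces are routine applications of known results, and the main difficulty becomes tracking the precise numerical constants in (2c) to match the form given in the statement.
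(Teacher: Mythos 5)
Your argument for part (a) is correct, and it is genuinely different from---and in the inversion case stronger than---the paper's. Writing $S$ for the set of $x$ with $\alpha(x)=x^e$, the identities $\alpha(g)=x^{-1}g^{-1}x$ (for $e=-1$) and $g^{-1}\alpha(g)=xgx^{-1}$ (for $e=2$), valid whenever $x,gx\in S$, do confine the admissible $x$ for fixed $g$ to a single coset of $\C_G(g)$ (the transporter of $g^{-1}$, resp.\ $g$, to a prescribed element), and the double count
\[
|S|^2=|\{(g,x):x\in S,\ gx\in S\}|\leq\sum_{g\in G}|\C_G(g)|=\cp(G)\,|G|^2
\]
is valid (the set $Sx^{-1}$ of valid $g$ for fixed $x$ is not a coset, but it has $|S|$ elements, which is all you use). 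This yields $\cp(G)\geq\l_{-1}(G)^2$ and $\cp(G)\geq\l_2(G)^2$ in one stroke. The paper instead proves (1,a) via a combinatorial lemma on pairwise intersections of the translates $(sS)_{s\in S}$ (Lemma \ref{combinatorialLem}) combined with a centralizer estimate (Lemma \ref{translateLem}), obtaining only $\cp(G)\geq\frac{1}{12}\rho^5$ and requiring a separate treatment of small $|G|$; for (2,a) it counts fixed points of $\alpha$ and invokes two lemmas from \cite{Bor16a}. Your single elementary argument needs no case distinction and improves (1,a) to $\cp(G)\geq\rho^2$, hence (1,b) to $[G:\Fit(G)]\leq\rho^{-4}$; since $\rho\leq1$, both imply the stated weaker bounds. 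Parts (b) and (1,c) are then handled exactly as in the paper (Guralnick--Robinson's $\cp(G)\leq[G:\Fit(G)]^{-1/2}$, and citation of the earlier result, respectively).

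The one genuine gap is (2,c). To get from $\cp(G)\geq\rho^2$ to the stated derived-length bound one must first pass to the solvable radical via $\cp(\Rad(G))\geq\cp(G)$ (\cite[Lemma 2(iii)]{GR06a}) and then apply the known bound on the derived length of a solvable group in terms of its commuting probability, namely \cite[Theorem 12(i)]{GR06a}; this is what the paper does. Your proposed re-derivation by \enquote{iterating through the derived series} is too vague to verify: no reason is given why $[G^{(l)}:G^{(l+1)}]$ should be bounded below by anything growing like $2^{l+1}$ (successive derived quotients can be small), and the origin of the quantity $(4l-7)/\rho^2$ is not explained, so the comparison does not get off the ground as written. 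Either cite the Guralnick--Robinson result or supply its actual proof; matching the shape of the final inequality to the statement is not a substitute for an argument.
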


We note that the main novelty in Theorem \ref{newTheo} are the lower bounds on $\cp(G)$; once they are established, the rest follows rather easily from results of \cite{GR06a} and \cite{Heg05a}. Furthermore, that $\l_2(G)\geq\rho$ implies the lower bound on $\cp(G)$ asserted in Theorem \ref{newTheo}(2,a) is a rather easy consequence of results from \cite{Bor16a}, so the only part of Theorem \ref{newTheo} for the proof of which we need an essentially new idea is subpoint (1,a). We will discuss this new idea in the next section. Finally, we note that similar results can be derived under the assumption $\l_3(G)\geq\rho$ if one assumes that the order of $G$ is odd, see Proposition \ref{oddOrderProp} below.

We will use the following notation throughout the paper: For a group $G$ and an element $g\in G$, $\C_G(g)$ denotes the centralizer of $g$ in $G$, $\zeta G$ the center of $G$, and $\tau_g:G\rightarrow G,x\mapsto gxg^{-1}$, denotes the conjugation by $g$ on $G$.

\section{Intersection of translates of the set of elements inverted by a finite group automorphism}

Our argument builds up on a part of a proof of the following well-known fact, which we review first:

\begin{proposition}\label{knownProp}
A finite group $G$ with $\l_{-1}(G)>\frac{3}{4}$ is abelian.
\end{proposition}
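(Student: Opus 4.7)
My plan is to pick an automorphism $\alpha\in\Aut(G)$ realizing $\l_{-1}(G)>\frac{3}{4}$ and to show that the set $S:=\{x\in G:\alpha(x)=x^{-1}\}$ forces every $g\in G$ to be central. The only real tool is the following simple observation: one looks at the translate-intersection $S\cap Sg^{-1}$ and shows that it cannot be larger than a single coset of $\C_G(g)$.

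Concretely, fix $g\in G$. Inclusion-exclusion together with $|S|>\frac{3}{4}|G|$ gives $|S\cap Sg^{-1}|\geq 2|S|-|G|>\frac{1}{2}|G|$. For any $x\in S\cap Sg^{-1}$, both $x$ and $xg$ lie in $S$, so applying $\alpha$ and using its multiplicativity yields
\[
x^{-1}\alpha(g)=\alpha(x)\alpha(g)=\alpha(xg)=(xg)^{-1}=g^{-1}x^{-1},
\]
whence $\alpha(g)=xg^{-1}x^{-1}$. In particular, if $x,y\in S\cap Sg^{-1}$ then $xg^{-1}x^{-1}=yg^{-1}y^{-1}$, so $y^{-1}x\in\C_G(g^{-1})=\C_G(g)$; hence $S\cap Sg^{-1}$ is contained in a single left coset of $\C_G(g)$. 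Combining the two estimates gives $|\C_G(g)|>\frac{1}{2}|G|$, and since $\C_G(g)$ is a subgroup of $G$, this forces $\C_G(g)=G$, i.e.\ $g\in\zeta G$. As $g\in G$ was arbitrary, $G$ is abelian.

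I do not foresee any real obstacle here; the argument is essentially one line of computation flanked by a density count and the Lagrange-type fact that a proper subgroup has index at least $2$. The structural moral that I expect the next section to exploit is the one already used above: every element of $S\cap Sg^{-1}$ witnesses the \emph{same} conjugator sending $g^{-1}$ to $\alpha(g)$, so translate-intersections of $S$ automatically land inside cosets of centralizers. This is the natural seed for a quantitative commuting-probability estimate in the regime where $\l_{-1}(G)$ is merely bounded below by some $\rho\in(0,1]$, which is precisely what Theorem~\ref{newTheo}(1,a) demands.
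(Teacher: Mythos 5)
Your proof is correct; every step checks out, and the density count ($|S\cap Sg^{-1}|\geq 2|S|-|G|>\frac{1}{2}|G|$) plus Lagrange is exactly the quantitative skeleton the paper uses. But the algebraic middle step is organized genuinely differently, and the difference is worth recording. The paper's proof only considers translates $sS$ with $s\in S$: for $t\in S$ with $st\in S$ it computes $t^{-1}s^{-1}=\alpha(st)=s^{-1}t^{-1}$ and concludes $t\in\C_G(s)$ \emph{directly}, so that more than half of $G$ centralizes $s$; this gives $S\subseteq\zeta G$ and then needs a \emph{second} application of Lagrange ($|\zeta G|>\frac{3}{4}|G|$) to finish. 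You instead take $g\in G$ arbitrary, observe that every $x\in S\cap Sg^{-1}$ forces $\alpha(g)=xg^{-1}x^{-1}$, and deduce that the whole intersection lies in a \emph{single left coset} of $\C_G(g)$ --- not in $\C_G(g)$ itself. That buys you two things: the argument applies to all $g$ at once (so one Lagrange application suffices and the detour through $\zeta G$ disappears), and the mechanism ``two conjugators producing the same element differ by a centralizer element'' is precisely the one the paper deploys later, in the proof of Lemma~\ref{translateLem}, where it shows $\tau_{s^{-1}}(st^{-1})=\tau_u(st^{-1})$ and hence $su\in\C_G(st^{-1})$. So your closing remark is on target: your version of Proposition~\ref{knownProp} already contains the seed that Section~2 generalizes, whereas the paper's own proof of the proposition uses the slightly different ``both factors inverted'' computation and only switches to the coset/conjugator viewpoint when it needs the quantitative lemma.
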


\begin{proof}[Proof (see \cite{GPa})]
Fix an automorphism $\alpha$ of $G$ inverting more than $\frac{3}{4}|G|$ many elements, and denote by $S$ the set of elements inverted by $\alpha$. For $s\in S$, since both $S$ and its translate $sS$ are subsets of $G$ size more than $\frac{3}{4}|G|$, it follows that $|sS\cap S|>\frac{1}{2}|G|$. Hence for more than $\frac{1}{2}|G|$ many $t\in S$, we have that $st\in S$ as well. It follows that $t^{-1}s^{-1}=(st)^{-1}=\alpha(st)=\alpha(s)\alpha(t)=s^{-1}t^{-1}$, or equivalently $t\in\C_G(s)$. Therefore, $|\C_G(s)|>\frac{1}{2}|G|$, and thus $\C_G(s)=G$, i.e., $s\in\zeta G$, by Lagrange's theorem. We just showed that $S\subseteq\zeta G$, whence $\zeta G=G$ by another application of Lagrange's theorem, and so $G$ is abelian.
\end{proof}

The gist of this argument is that because $S$ is so large, the intersection of $S$ with the translate $sS$ by any element $s\in S$ is also large (first inference), and therefore, all $s\in S$ have large centralizers (second inference). Both inferences have analogues under the weaker assumption that $|S|\geq\rho|G|$ for some fixed $\rho\in\left(0,1\right]$. The following elementary lemma on intersections of \enquote{non-negligible} subsets of finite sets generalizes the first inference:

\begin{lemma}\label{combinatorialLem}
Let $\rho\in\left(0,1\right]$, $M$ a finite set, $(S_i)_{i\in I}$ a nonempty family of subsets of $M$ such that $|S_i|\geq\rho|M|$ for all $i\in I$. Set $k(\rho):=\lceil\rho^{-1}\rceil+1$ (so that $k(\rho)\cdot\rho\geq 1+\rho$) and $t(\rho):=\frac{\rho}{\Delta_{k(\rho)-1}}=\frac{\rho}{\Delta_{\lceil\rho^{-1}\rceil}}$, where $\Delta_n:=\frac{1}{2}n(n+1)$ denotes the $n$-th triangle number. Then the following hold:
\begin{enumerate}
\item If $J\subseteq I$ with $|J|\geq k(\rho)$, then there exist distinct $i,j\in I$ such that $|S_i\cap S_j|\geq t(\rho)|M|$.
\item There exists $i\in I$ such that for at least $\frac{|I|-(k(\rho)-1)}{k(\rho)-1}$ many $j\in I\setminus\{i\}$, we have $|S_i\cap S_j|\geq t(\rho)|M|$.
\item If $|I|\geq 2(k(\rho)-1)$, then there exists $i\in I$ such that for at least $\frac{1}{2(k(\rho)-1)}|I|$ many $j\in I\setminus\{i\}$, we have $|S_i\cap S_j|\geq t(\rho)|M|$.
\end{enumerate}
\end{lemma}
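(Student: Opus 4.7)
My plan is to prove the three parts in sequence, concentrating essentially all the work in Part~(1); Parts~(2) and~(3) then follow by routine graph-theoretic consequences.

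For Part~(1), I would fix any subfamily $J\subseteq I$ of cardinality exactly $k(\rho)$ and set $d(x):=|\{i\in J:x\in S_i\}|$ for $x\in M$. The defining inequality $k(\rho)\rho\geq 1+\rho$ gives $\sum_{x\in M}d(x)=\sum_{i\in J}|S_i|\geq (1+\rho)|M|$. Combining this with the pointwise bound $\binom{d}{2}\geq d-1$, which holds for every integer $d\geq 0$, yields
\[
\sum_{\{i,j\}\subseteq J}|S_i\cap S_j| \;=\; \sum_{x\in M}\binom{d(x)}{2} \;\geq\; \sum_{x\in M}d(x)-|M| \;\geq\; \rho|M|.
\]
A pigeonhole argument over the $\binom{k(\rho)}{2}=\Delta_{k(\rho)-1}$ pairs in $J$ then produces some $\{i,j\}\subseteq J$ (I read the statement as requiring $i,j\in J$; taking it literally as $i,j\in I$ makes the role of $J$ vacuous) with $|S_i\cap S_j|\geq \rho|M|/\Delta_{k(\rho)-1}=t(\rho)|M|$.

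For Part~(2), I would form the graph $\Gamma$ on vertex set $I$ whose edges are exactly the pairs $\{i,j\}$ with $|S_i\cap S_j|\geq t(\rho)|M|$. Part~(1) is then equivalent to the statement that $\Gamma$ has no independent set of size $k(\rho)$, i.e.\ $\omega(\overline{\Gamma})\leq k(\rho)-1$. By Tur\'an's theorem applied to $\overline{\Gamma}$, the average (and hence minimum) degree of $\overline{\Gamma}$ is at most $\tfrac{k(\rho)-2}{k(\rho)-1}|I|$, so some vertex $i\in I$ has $\Gamma$-degree at least $|I|-1-\tfrac{k(\rho)-2}{k(\rho)-1}|I|=\tfrac{|I|-(k(\rho)-1)}{k(\rho)-1}$, which is exactly Part~(2). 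Part~(3) is then immediate: when $|I|\geq 2(k(\rho)-1)$ one has $|I|-(k(\rho)-1)\geq|I|/2$, which reduces the Part~(2) bound to $|I|/(2(k(\rho)-1))$.

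The only genuinely non-routine step is the estimate in Part~(1), and I anticipate no serious obstacle there. The argument depends on two small observations: the pointwise inequality $\binom{d}{2}\geq d-1$ (tight at $d\in\{0,1,2\}$), and the careful calibration of $k(\rho)$ so that the \enquote{excess mass} $\sum_{i\in J}|S_i|-|M|$ is at least $\rho|M|$, which is precisely what yields the coefficient $t(\rho)=\rho/\Delta_{k(\rho)-1}$. An attempt to replace the pointwise inequality by Cauchy--Schwarz would give a weaker constant.
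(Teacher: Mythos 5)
Your proof is correct, including the edge cases ($\rho=1$ and $|I|<k(\rho)$), and your reading of part~(1) --- that the conclusion should produce $i,j\in J$ --- is what the paper's own argument actually delivers. The route differs from the paper's in both nontrivial steps. For~(1), the paper argues by contradiction: assuming all pairwise intersections among $S_{j_1},\ldots,S_{j_{k(\rho)}}$ are below $t(\rho)|M|$, it bounds the partial unions $U_l=\bigcup_{i\leq l}S_{j_i}$ from below by induction and arrives at the impossible $|U_{k(\rho)}|>|M|$. Your direct double count $\sum_{\{i,j\}}|S_i\cap S_j|=\sum_{x}\binom{d(x)}{2}\geq\sum_x d(x)-|M|\geq\rho|M|$, followed by pigeonhole over the $\Delta_{k(\rho)-1}$ pairs, is essentially the Bonferroni inequality that underlies the paper's induction, extracted and used positively rather than contrapositively; it is arguably cleaner and makes transparent where the constant $t(\rho)=\rho/\Delta_{k(\rho)-1}$ comes from. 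For~(2), the paper takes a maximal $J\subseteq I$ whose members pairwise have small intersection (so $|J|\leq k(\rho)-1$ by~(1)), maps each remaining index to a partner in $J$ with large intersection, and pigeonholes over the fibers of that map; you instead apply Tur\'an's theorem to the complement of the \enquote{large-intersection} graph and pass from average to minimum degree. Both yield the identical bound $\frac{|I|-(k(\rho)-1)}{k(\rho)-1}$; the paper's fiber argument is more elementary and self-contained, while yours invokes a standard theorem but is shorter and places the lemma in its natural extremal-graph-theoretic setting. Part~(3) is the same trivial manipulation in both.
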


\begin{proof}
For (1): We may of course assume w.l.o.g.~that $|J|=k(\rho)$, and show the assertion for such $J$ by contradiction; assume that $|S_i\cap S_j|<t(\rho)|M|$ for all distinct $i,j\in J$. Say $J=\{j_1,\ldots,j_{k(\rho)}\}$, and set, for $l=1,\ldots,k(\rho)$, $U_l:=\bigcup_{i=1}^{l}{S_{j_i}}$. We show by induction on $l$ that

\begin{equation}\label{eq1}
|U_l|>(l\cdot\rho-\Delta_{l-1}t(\rho))|M|
\end{equation}

for $l=2,\ldots,k(\rho)$. Indeed, we find that

\[
|U_2|=|S_{j_1}\cup S_{j_2}|\geq|S_{j_1}|+|S_{j_2}|-|S_{j_1}\cap S_{j_2}|>\rho|M|+\rho|M|-t(\rho)|M|=(2\rho-t(\rho))|M|,
\]

and if the assertion has been verified up to $l-1$, it follows that

\begin{align*}
|U_l| &=|U_{l-1}\cup S_{j_l}|\geq|U_{l-1}|+|S_{j_l}|-|U_{l-1}\cap S_{j_l}| \\
&\geq((l-1)\rho-\Delta_{l-2}t(\rho))|M|+\rho|M|-|\bigcup_{i=1}^{l-1}{S_{j_i}\cap S_{j_l}}| \\
&>(l\rho-\Delta_{l-2}t(\rho))|M|-(l-1)t(\rho)|M|=(l\rho-\Delta_{l-1}t(\rho))|M|,
\end{align*}

as required. However, by setting $l:=k(\rho)$ in Equation (\ref{eq1}), we get that

\[
|U_{k(\rho)}|>(k(\rho)\rho-\Delta_{k(\rho)-1}t(\rho))|M|\geq(1+\rho-\rho)|M|=|M|,
\]

a contradiction.

For (2): If $|I|\leq k(\rho)-1$, there is nothing to show, so assume that $|I|\geq k(\rho)$. Let $J\subseteq I$ be maximal such that for all distinct $i,j\in J$, we have $|S_i\cap S_j|<t(\rho)|M|$. By (1), $|J|\leq k(\rho)-1$. Set $K:=I\setminus J$; then $|K|\geq |I|-(k(\rho)-1)$. Furthermore, by maximality of $J$, there exists a function $\iota:K\rightarrow J$ such that for all $j\in K$, $|S_{\iota(j)}\cap S_j|\geq t(\rho)|M|$. For at least one $i\in J$, the fiber $\iota^{-1}[\{i\}]$ has size at least $\frac{|K|}{|J|}\geq\frac{|I|-(k(\rho)-1)}{k(\rho)-1}$, and any such $i$ \enquote{does the job}.

For (3): This follows from (2), since by assumption,

\[
\frac{|I|-(k(\rho)-1)}{k(\rho)-1}=\frac{|I|}{k(\rho)-1}-1\geq\frac{|I|}{k(\rho)-1}-\frac{1}{2}\frac{|I|}{k(\rho)-1}=\frac{1}{2(k(\rho)-1)}|I|.
\]
\end{proof}

The second inference has the following generalization:

\begin{lemma}\label{translateLem}
Let $\epsilon\in\left(0,1\right]$, $G$ a finite group, $\alpha$ an automorphism of $G$, $S$ the set of elements of $G$ inverted by $\alpha$. Assume that $s,t\in S$ are such that $|sS\cap tS|\geq\epsilon|G|$. Then $|\C_G(st^{-1})|\geq\epsilon|G|$.
\end{lemma}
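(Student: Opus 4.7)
The plan is to show the set inclusion $sS \cap tS \subseteq \C_G(st^{-1})$, from which the claimed size bound is immediate since by hypothesis $|sS \cap tS| \geq \epsilon|G|$.

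To carry this out, I take an arbitrary $u \in sS \cap tS$ and unpack what this means: $s^{-1}u$ and $t^{-1}u$ both lie in $S$, i.e., both are inverted by $\alpha$. Applying $\alpha$ (a homomorphism) to $s^{-1}u$ and using $\alpha(s) = s^{-1}$ (since $s \in S$) gives $\alpha(s^{-1}u) = s\,\alpha(u)$, and this must equal $(s^{-1}u)^{-1} = u^{-1}s$. The same calculation applied to $t^{-1}u$ yields $t\,\alpha(u) = u^{-1}t$. Solving each equation for $\alpha(u)$ and setting the two expressions equal produces $s^{-1}u^{-1}s = t^{-1}u^{-1}t$, which rearranges to say that $u^{-1}$ (equivalently $u$) commutes with $ts^{-1}$, i.e., with $(st^{-1})^{-1}$, hence with $st^{-1}$ itself. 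So $u \in \C_G(st^{-1})$, as desired.

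I do not expect any significant obstacle: the whole argument is a direct generalization of the calculation carried out in the proof of Proposition \ref{knownProp}, where the roles of $s$ and $t$ coincided with $s$ and the identity (so $st^{-1} = s$ gave the centralizer condition $t \in \C_G(s)$). The only mild subtlety worth being careful about is keeping the identities $\alpha(s) = s^{-1}$ and $\alpha(s^{-1}) = s$ straight and translating the two membership conditions $s^{-1}u, t^{-1}u \in S$ into the right pair of equations for $\alpha(u)$.
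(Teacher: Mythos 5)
Your proof is correct and is essentially the paper's argument in a slightly repackaged form: the paper translates by $s^{-1}$ and shows that $su\in\C_G(st^{-1})$ for every $u\in S\cap s^{-1}tS$, which is exactly your inclusion $sS\cap tS\subseteq\C_G(st^{-1})$ after the bijection $u\mapsto su$, and the underlying computation with $\alpha$ is the same. No gaps.
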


\begin{proof}
By assumption, we have $|S\cap s^{-1}tS|=|s^{-1}(sS\cap tS)|=|sS\cap tS|\geq\epsilon|G|$. In other words, for at least $\epsilon|G|$ many $u\in S$, we have that $s^{-1}tu\in S$ as well. It follows that $u^{-1}t^{-1}s=(s^{-1}tu)^{-1}=\alpha(s^{-1}tu)=\alpha(s)^{-1}\alpha(t)\alpha(u)=st^{-1}u^{-1}$, or equivalently $\tau_{s^{-1}}(st^{-1})=t^{-1}s=\tau_u(st^{-1})$, whence for all such $u$, we have $su\in\C_G(st^{-1})$, and the assertion follows.
\end{proof}

\section{Proof of Theorem \ref{newTheo}}

For (1,a): First, assume that $|G|<2(k(\rho)-1)\rho^{-1}=2\lceil\rho^{-1}\rceil\rho^{-1}\leq 4\rho^{-2}$. Then if we had $\cp(G)<\frac{1}{12}\rho^5$, we would get the contradictory chain of inequalities $\frac{1}{12}\rho^5>\cp(G)\geq |G|^{-1}>\frac{1}{4}\rho^2$. Therefore, we may assume that $|G|\geq 2(k(\rho)-1)\rho^{-1}$. Let $\alpha$ be an automorphism of $G$ inverting at least $\rho|G|$ many elements of $G$, and let $S$ be the set of such elements. Note that by assumption, $|S|\geq\rho|G|\geq 2(k(\rho)-1)$. Hence by applying Lemma \ref{combinatorialLem}(3) to the family $(sS)_{s\in S}$ of subsets of $G$, we get that there exists $s\in S$ such that for at least $\frac{|S|}{2(k(\rho)-1)}\geq\frac{\rho}{2(k(\rho)-1}|G|$ many elements $t\in S$, $|sS\cap tS|\geq t(\rho)|G|$. By Lemma \ref{translateLem}, this yields that for all such $t$, $|\C_G(st^{-1})|\geq t(\rho)|G|$. Hence

\begin{align*}
\cp(G) &\geq\frac{\rho}{2(k(\rho)-1)}\cdot t(\rho)=\frac{\rho}{2\lceil\rho^{-1}\rceil}\cdot\frac{\rho}{\Delta_{\lceil\rho^{-1}\rceil}}=\frac{\rho^2}{\lceil\rho^{-1}\rceil^2(\lceil\rho^{-1}\rceil+1)} \\
&\geq\frac{\rho^2}{(\rho^{-1}+1)^2(\rho^{-1}+2)}\geq\frac{\rho^2}{(2\rho^{-1})^2\cdot 3\rho^{-1}}=\frac{1}{12}\rho^5.
\end{align*}

For (1,b): This follows immediately from (1,a) and $\cp(G)\leq[G:\Fit(G)]^{-1/2}$, see \cite[Theorem 10(ii)]{GR06a}.

For (1,c): This is part of the statement of \cite[Theorem 1.1.3(1)]{Bor16a}.

For (2,a): Fix an automorphism $\alpha$ squaring at least $\rho|G|$ many elements of $G$, and let $S$ be the set of such elements. By \cite[Lemma 2.1.6]{Bor16a}, this implies that $\alpha$ has at most $\rho^{-1}$ many fixed points, and thus, by \cite{Gus73a} and \cite[Lemma 2.1.2]{Bor16a}, we have $\rho\leq\frac{|S|}{|G|}\leq\cp(G)\cdot\rho^{-1}$, whence $\cp(G)\geq\rho^2$, as required.

For (2,b): This follows from (2,a) just like (1,b) follows from (1,a).

For (2,c): By (2,a) and \cite[Lemma 2(iii)]{GR06a}, we get that $\rho^2\leq\cp(\Rad(G))$, which implies the assertion via \cite[Theorem 12(i)]{GR06a}.\qed

\section{Concluding remarks}

\subsection{On the use of the CFSG for our results}

By showing that $\cp(G)$ can be bounded from below in terms of both $\l_{-1}(G)$ and $\l_2(G)$, we could reduce bounding other parameters of $G$ (such as the index of the Fitting subgroup) in terms of both $\l_{-1}(G)$ and $\l_2(G)$ to Guralnick and Robinson's results on the commuting probability from \cite{GR06a}. Our arguments leading to the lower bounds of the form $\cp(G)\geq f_1(\l_{-1}(G))$ and $\cp(G)\geq f_2(\l_2(G))$ are elementary; they do not require the CFSG nor any other tools from outside elementary group theory, such as character theory.

However, we note that Guralnick and Robinson's result $\cp(G)\geq\rho \Rightarrow [G:\Fit(G)]\leq\rho^{-2}$ \cite[Theorem 10(ii)]{GR06a}, which we used to get the simple bounds on $[G:\Fit(G)]$ from Theorem \ref{newTheo}(1,b and 2,b), does require the CFSG. More precisely, \cite[Theorem 10(ii)]{GR06a} depends on two other results from the same paper:

\begin{itemize}
\item \cite[Theorem 4(ii)]{GR06a}, stating that in a finite \emph{solvable} group $G$, we have $\cp(G)\leq\cp(\Fit(G))^{1/2}[G:\Fit(G)]^{-1/2}$, and
\item \cite[Theorem 9]{GR06a}, which says that $\cp(G)\leq[G:\Rad(G)]^{-1/2}$ in \emph{all} finite groups.
\end{itemize}

The proof of \cite[Theorem 4(ii)]{GR06a} does not require the CFSG (though it does require quite a bit of character theory, more precisely one of the main results of \cite{Kno84a}), but the CFSG is used for \cite[Theorem 9]{GR06a}. However, just to show CFSG-freely that $\cp(G)\geq\rho$ implies that $[G:\Fit(G)]$ is bounded \emph{per se} (without the explicit bound established with the CFSG), it would suffice to show CFSG-freely that $\cp(G)\geq\rho$ implies that $[G:\Rad(G)]$ is bounded in terms of $\rho$ (and combine this with the CFSG-free \cite[Theorem 4(ii)]{GR06a} just as Guralnick and Robinson did). And this is indeed possible:

\begin{proposition}\label{cfsgFreeProp}(CFSG-free)
For finite groups $G$, $\cp(G)\to0$ as $[G:\Rad(G)]\to\infty$.
\end{proposition}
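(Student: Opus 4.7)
The plan is to invoke a classical theorem of P.M.~Neumann from his paper \emph{Two combinatorial problems in group theory} (Bull. London Math. Soc., 1989), whose proof rests only on elementary BFC-group arguments and thus makes no use of the CFSG. It states that for every $\rho\in\left(0,1\right]$ there exist constants $f_1(\rho),f_2(\rho)$ such that any finite group $G$ with $\cp(G)\geq\rho$ admits a normal subgroup $N\trianglelefteq G$ satisfying $[G:N]\leq f_1(\rho)$ and $|N'|\leq f_2(\rho)$. The strategy is then to pass from such an $N$ to a normal \emph{abelian} subgroup of $G$ whose index is bounded in terms of $\rho$ alone; such a subgroup is automatically contained in $\Rad(G)$, yielding the desired bound on $[G:\Rad(G)]$.

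Concretely, since $|N'|$ is bounded in terms of $\rho$, Schur's theorem in the effective form due to Wiegold furnishes a universal function $w$ with $[N:\zeta N]\leq w(|N'|)\leq w(f_2(\rho))$. The subgroup $\zeta N$ is characteristic in $N$ and hence normal in $G$; being abelian, it lies in $\Rad(G)$. Therefore
\[
[G:\Rad(G)]\;\leq\;[G:\zeta N]\;=\;[G:N]\cdot[N:\zeta N]\;\leq\;f_1(\rho)\cdot w(f_2(\rho)),
\]
a bound depending on $\rho$ alone. Taking the contrapositive gives $\cp(G)\to 0$ as $[G:\Rad(G)]\to\infty$.

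The only nontrivial ingredient is the first one: quoting Neumann's theorem. Everything subsequent is routine Schur-theoretic manipulation, and no appeal to Guralnick--Robinson's \cite[Theorem 9]{GR06a} (the step where CFSG enters their argument) is needed. An a priori alternative would be to use Gallagher's inequality $\cp(G)\leq\cp(G/\Rad(G))$ to reduce to the case $\Rad(G)=1$ and then bound $|G|$ in terms of $\cp(G)$ directly for semisimple $G$; but executing this CFSG-freely seems to require nontrivial input on $\cp$-values of nonabelian simple groups and on the index of the socle in a semisimple group, and so appears distinctly harder than routing through Neumann's BFC-type result.
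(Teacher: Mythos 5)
There is a genuine gap at the Schur step. Schur's theorem (and Wiegold's quantitative version of it) runs in the \emph{opposite} direction: a bound on $[N:\zeta N]$ yields a bound on $|N'|$. The converse you invoke --- a universal function $w$ with $[N:\zeta N]\leq w(|N'|)$ --- is false: an extraspecial $p$-group $N$ of order $p^{2n+1}$ has $|N'|=p$ but $[N:\zeta N]=p^{2n}$, unbounded. So the displayed chain of inequalities collapses, and no bound on $[G:\zeta N]$ follows from Neumann's theorem by this route. (Note also that such examples are not pathological for the statement at hand --- they are nilpotent, so $\Rad(G)=G$ --- which is a hint that $\zeta N$ is the wrong subgroup to aim for: you need a bounded-index normal subgroup that is merely \emph{solvable}, not abelian.) The gap is repairable while keeping your overall strategy: either quote P.~Hall's partial converse to Schur's theorem, which bounds $[N:Z_2(N)]$ (the second centre) in terms of $|N'|$, and observe that $Z_2(N)$ is characteristic in $N$, normal in $G$, nilpotent, hence contained in $\Rad(G)$; or, more directly, pass to $\C_N(N')$, which is normal in $G$ of index at most $[G:N]\cdot|\Aut(N')|$ (bounded in terms of $\rho$) and satisfies $\C_N(N')'\leq N'\cap\C_N(N')\leq\zeta\,\C_N(N')$, so it is nilpotent of class at most $2$ and lies in $\Fit(G)\leq\Rad(G)$.

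For comparison: the paper takes exactly the ``alternative'' route you set aside as harder. It uses $\cp(G)\leq\cp(G/\Rad(G))$ together with the elementary parts of \cite{GR06a} (Lemma 2(iii),(iv)) and Gustafson's $5/8$ bound to bound the number and the orders of the nonabelian composition factors --- the order bound coming from the observation that a simple group $S$ has all proper subgroups, in particular all centralizers of nontrivial elements, of index at least $r(S)$ where $r(S)!\geq|S|$, so $\cp(S)\to0$ as $|S|\to\infty$ --- and then bounds $[G:\Rad(G)]$ via the embedding of $G/\Rad(G)$ into $\Aut(\Soc(G/\Rad(G)))$. This is entirely self-contained and CFSG-free. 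Your Neumann-based route, once corrected as above, is a legitimately different and equally CFSG-free argument, at the cost of importing a substantially heavier black box.
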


\begin{proof}
Fix $\rho\in\left(0,1\right]$, and assume that $G$ is a finite group with $\cp(G)\geq\rho$. We will show that $[G:\Rad(G)]$ is bounded. By \cite[Lemma 2(iv)]{GR06a} and the fact that $\cp(G)\leq\frac{5}{8}$ when $G$ is nonabelian \cite{Gus73a}, we get that the number of non-abelian composition factors of $G$, counting with repetitions, is bounded. Furthermore, the order of each such composition factor $S$ is also bounded, in view of $\cp(S)\geq\rho$ (which follows from \cite[Lemma 2(iii)]{GR06a}). This is because by simplicity of $S$, the minimum index of a proper subgroup of $S$ is bounded from below by the smallest positive integer $r(S)$ such that $r(S)!\geq |S|$, and $r(S)\to\infty$ as $|S|\to\infty$. Hence $\cp(S)\leq\frac{1-1/|S|}{r(S)}+\frac{1}{|S|}\to 0$ as $|S|\to\infty$, because centralizers of nontrivial elements of $S$ are proper subgroups.

We now use some facts explained in detail in \cite[pp.~88ff.]{Rob96a}. Since $G/\Rad(G)$ has trivial solvable radical, its socle is a direct product of nonabelian finite simple groups, all of which are composition factors of $G$. Hence in view of the last paragraph, $|\Soc(G/\Rad(G))|$ is bounded, and thus $|G/\Rad(G)|=[G:\Rad(G)]$ is bounded, since $G/\Rad(G)$ embeds into $\Aut(\Soc(G/\Rad(G)))$.
\end{proof}

\subsection{Bounding \texorpdfstring{$\cp(G)$}{cp(G)} in terms of \texorpdfstring{$\l_3(G)$}{l3(G)}?}

Note that while the author was able to show in \cite{Bor16a} that under an assumption of the form $\l_3(G)\geq\rho$, the index $[G:\Rad(G)]$ is bounded in terms of $\rho$, it is still open whether this condition is also strong enough to imply that the derived length of $\Rad(G)$ is bounded. Of course, if one could bound $\cp(G)$ from below in terms of $\l_3(G)$ (as we did for $\l_{-1}(G)$ and $\l_2(G)$ here), this would solve the problem instantly. We note the following argument, which covers at least the groups $G$ of odd order:

\begin{proposition}\label{oddOrderProp}
Let $G$ be a finite group of odd order. Then $\cp(G)\geq\l_3(G)^2$.
\end{proposition}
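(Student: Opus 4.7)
Following the template of the proof of Theorem~\ref{newTheo}(2,a), I plan to establish the two inequalities $|\fix(\alpha)|\leq\rho^{-1}$ and $|S|/|G|\leq\cp(G)\cdot|\fix(\alpha)|$ -- the cubing/odd-order analogues of \cite[Lemma 2.1.6]{Bor16a} and \cite[Lemma 2.1.2]{Bor16a}, respectively -- and then chain them. Here $\rho:=\l_3(G)$, $\alpha$ is a fixed automorphism of $G$ achieving the maximum, and $S:=\{g\in G:\alpha(g)=g^3\}$, so $|S|\geq\rho|G|$. A useful preliminary observation is that for every $g\in S$ one has $\gcd(3,|g|)=1$: indeed $\alpha$ preserves element orders, so $|g|=|\alpha(g)|=|g^3|=|g|/\gcd(3,|g|)$. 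In particular $\C_G(g)=\C_G(g^3)$ for every $g\in S$.

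For the fixed-point bound, consider the Lang-type map $\phi\colon G\to G$, $\phi(x):=x^{-1}\alpha(x)$. A direct computation shows that $\phi(x)=\phi(y)$ iff $yx^{-1}\in\fix(\alpha)$, so the fibers of $\phi$ are cosets of the subgroup $\fix(\alpha)$; hence every non-empty fiber has size $|\fix(\alpha)|$ and $|\phi(G)|=[G:\fix(\alpha)]$. Now $\phi|_S$ coincides with the squaring map $g\mapsto g^2$, since for $g\in S$, $\phi(g)=g^{-1}g^3=g^2$. Because $|G|$ is odd, squaring is a bijection of $G$ (with inverse $g\mapsto g^{(|G|+1)/2}$), so $\phi|_S$ is injective. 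Thus $|S|\leq|\phi(G)|=|G|/|\fix(\alpha)|$, giving $|\fix(\alpha)|\leq|G|/|S|\leq\rho^{-1}$.

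For the second inequality, fix a conjugacy class $C$ of $G$ meeting $S$ and pick $g\in C\cap S$. For any $x\in G$, one checks $xgx^{-1}\in S$ iff $\alpha(x)g^3\alpha(x)^{-1}=xg^3x^{-1}$, which -- using $\C_G(g^3)=\C_G(g)$ -- amounts to $\phi(x)\in\C_G(g)$. Since each element of $C$ has exactly $|\C_G(g)|$ preimages under $x\mapsto xgx^{-1}$ and each non-empty fiber of $\phi$ has size $|\fix(\alpha)|$, one obtains $|C\cap S|\cdot|\C_G(g)|=|\phi^{-1}(\C_G(g))|\leq|\C_G(g)|\cdot|\fix(\alpha)|$, whence $|C\cap S|\leq|\fix(\alpha)|$. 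Summing over conjugacy classes and using that the number of conjugacy classes equals $\cp(G)|G|$ yields $|S|\leq\cp(G)\cdot|G|\cdot|\fix(\alpha)|$. Chaining the two bounds gives $\rho\leq|S|/|G|\leq\cp(G)\cdot|\fix(\alpha)|\leq\cp(G)\cdot\rho^{-1}$, so $\cp(G)\geq\rho^2$. The main obstacle is really the first step: in the squaring case $\phi|_{S_2}$ is the identity so injectivity is automatic, whereas for cubing $\phi|_S$ is the squaring map, which is precisely why odd order enters as the needed hypothesis.
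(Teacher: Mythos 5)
Your proof is correct and follows essentially the same route as the paper's: the key step --- that $\phi(g)=g^{-1}\alpha(g)$ restricted to $S$ is the squaring map, which is injective because $|G|$ is odd, forcing $|\fix(\alpha)|\leq\rho^{-1}$ --- is exactly the paper's argument. The second half merely spells out the counting estimate $|S|\leq k(G)\cdot|\fix(\alpha)|$ (hence $|S|/|G|\leq\cp(G)\cdot|\fix(\alpha)|$) that the paper imports from \cite[Lemma 2.1.2]{Bor16a} and \cite{Gus73a}, together with the correct and needed observation that $\gcd(3,|g|)=1$ for $g\in S$, so that $\C_G(g^3)=\C_G(g)$.
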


\begin{proof}
Set $\rho:=\l_3(G)$, fix an automorphism $\alpha$ of $G$ cubing $\rho|G|$ many elements of $G$, and let $S$ be the set of such elements. That is, we have

\[
\rho|G|=|S|=|\{g\in G\mid\alpha(g)=g^3\}|=|\{g\in G\mid g^{-1}\alpha(g)=g^2\}|\leq[G:\fix(\alpha)],
\]

where the last inequality holds since the map $g\mapsto g^{-1}\alpha(g)$ is constant on right cosets of the subgroup $\fix(\alpha)$ consisting of the fixed points of $\alpha$, whereas the map $g\mapsto g^2$ is injective on $G$. Hence $|\fix(\alpha)|\leq\rho^{-1}$, and we can conclude as in the proof of Theorem \ref{newTheo}(2,a).
\end{proof}


\begin{thebibliography}{1}

\bibitem{Bor16a}
A. Bors,
Finite groups with an automorphism inverting, squaring or cubing a non-negligible fraction of elements, preprint (2016),
\href{http://arxiv.org/abs/1601.04311}{arXiv:1601.04311} [math.GR].

\bibitem{GPa}
Groupprops, The Group Properties Wiki (beta),
Automorphism sends more than three-fourths of elements to inverses implies abelian,
\url{http://groupprops.subwiki.org/wiki/Automorphism_sends_more_than_three-fourths_of_elements_to_inverses_implies_abelian}.

\bibitem{GR06a}
R. M. Guralnick and G. R. Robinson,
On the commuting probability in finite groups,
\emph{J. Algebra} \textbf{300} (2006), 509--528.

\bibitem{Gus73a}
W. H. Gustafson,
What is the probability that two group elements commute?,
\emph{Amer. Math. Monthly} \textbf{80} (1973), 1031--1034.

\bibitem{Heg05a}
P. V. Hegarty,
Soluble groups with an automorphism inverting many elements,
\emph{Math. Proc. R. Ir. Acad.} \textbf{105A}(1) (2005), 59--73.

\bibitem{Kno84a}
R. Kn{\"o}rr,
On the number of characters in a $p$-block of a $p$-solvable group,
\emph{Illinois J. Math.} \textbf{28} (1984), 181--210.

\bibitem{Rob96a}
D. J. S. Robinson,
\emph{A Course in the Theory of Groups},
Springer (Graduate Texts in Mathematics, 80), New York, 2nd ed. 1996.

\end{thebibliography}
\end{document}